\newtheorem{Theorem}{Theorem}
\newtheorem{Lemma}[Theorem]{Lemma}
\newcommand{\dist}{\operatorname{dist}}
\newcommand{\conv}{\operatorname{conv}}
\newcommand{\Ima}{\operatorname{Im}}
\newcommand{\N}{\mathbb{N}}
\newcommand{\Z}{\mathbb{Z}}
\newcommand{\R}{\mathbb{R}}
\renewcommand{\:}{\colon}
\title{Block partitions in higher dimensions}
\author{Endre Cs\'oka
\thanks{Alfr\'ed R\'enyi Institute of Mathematics. Supported by the NRDI grant KKP~138270.}
}
\date{}
\begin{document}

\maketitle

\begin{abstract}
Consider a set $X\subseteq \mathbb{R}^d$ which is 1-dense, namely, it intersects every open unit ball. 
We show that we can get from any point to any other point in $\mathbb{R}^d$ in $n$ steps so that the intermediate points are in $X$, and the discrepancy of the step vectors is at most $2\sqrt{2}$. 
Or formally, $$\sup\limits_{\substack{n\in \mathbb{Z}^+,\ t\in \mathbb{R}^d\\ X\text{ is 1-dense}}}\,\, \inf\limits_{\substack{p_1,\ldots, p_{n-1}\in X\\ p_0=\underline{0},\ p_n=t}}\,\, \max\limits_{0\leq i<j<n} \big\|(p_{i+1}-p_i)-(p_{j+1}-p_j)\big\|\leq 2\sqrt{2}.$$

\end{abstract}
\noindent
\emph{Keywords:} Convex body, maximal block, discrepancy, sequences. \\

\section{Introduction}
A block of a sequence of real numbers $(a_1, \ldots, a_m)$ is a subsequence $(a_i, \ldots, a_k)$ of consecutive elements, and a block partition of the sequence is a list of blocks that contain each element exactly once. 
The size of a block is the sum of its elements. 
In \cite{BG15} it was shown that given an $n\in \N$ and a sequence of numbers in $[0,1]$, the sequence admits a block partition into $n$ blocks such that the difference between any two block sizes is at most 1. 
This result was generalized in \cite{BCKT18} to sequences of vectors in $\R^d$ by showing lower and upper bounds on the \emph{discrepancy} of such sequences. 
The discrepancy of $(p_0, \ldots, p_n)\in \R^{d\times (n+1)}$ is defined as $\delta(p_0, \ldots, p_n)=\max\limits_{0\leq i<j<n} \big\|(p_{i+1}-p_i)-(p_{j+1}-p_j)\big\|$. 
A set $X\subseteq \R^d$ is \emph{1-dense} if $\forall y\in \R^d\: \dist(y,X)< 1$. 
Then 
$$D_d=\sup\limits_{\substack{n\in \Z^+, t\in \R^d\\ X_1,\ldots, X_{n-1}\,\, \text{are 1-dense}}}\,\, \inf\limits_{\substack{p_1,\ldots, p_{n-1}\in X\\ p_0=\underline{0}, p_n=t}}\,\, \max\limits_{0\leq i<j<n} \big\|(p_{i+1}-p_i)-(p_{j+1}-p_j)\big\|=$$
$$\sup\limits_{\substack{n\in \Z^+, t\in \R^d\\ X_1,\ldots, X_{n-1}\,\, \text{are 1-dense}}}\,\, \inf\limits_{\substack{p_1,\ldots, p_{n-1}\in X\\ p_0=\underline{0}, p_n=t}}\,\, \delta(p_0, \ldots, p_n).$$
The above mentioned result from \cite{BG15} translates to $D_1=2$. 
Moreover, the inequalities $D_1\leq D_2\leq D_3\leq \cdots \leq 4$ are obvious. 
In \cite{BCKT18} the nontrivial lower estimate $1+\sqrt{2}\leq D_2$ was shown. 
For some related problems, see \cite{LPS93}. 

We define a variant of the discrepancy $\delta'(p_0, \ldots, p_n)=2\cdot\min\limits_{s\in \R^d}\max\limits_{0\leq i<n} \big\|(p_{i+1}-p_i)-s\big\|$. 
By replacing $\delta$ with $\delta'$ in the definition of $D_d$, we obtain $D_d'$. Clearly $\delta\leq \delta'$ for all tuples, and in particular, $D_d\leq D_d'$. 
The following theorem is the main result of the current paper. 

\begin{Theorem}\label{thm:firstform}
For all $d,n\in\N$, $t\in \R^d$ and $X_1, \ldots, X_{n-1} \subseteq \R^d$ 1-dense sets there exist a tuple $(p_0, \ldots, p_n)$ and $s\in \R^d$ such that $p_0=\underline{0}, p_n=t$, $p_i\in X_i$ for all $1\leq i \leq n-1$, and for all $0\leq i \leq n-1$ we have $\big\|(p_{i+1}-p_i)-s\big\|\leq \sqrt{2}$. 
In particular, $D_d\leq D'_d\leq 2\sqrt{2}$.
\end{Theorem}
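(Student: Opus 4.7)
The plan is to reduce first to a translated problem where the target becomes $0$: with the canonical choice $s = t/n$, define $Y_i := X_i - \tfrac{i}{n}t$, which is again 1-dense, and set $q_i := p_i - \tfrac{i}{n}t$. The task becomes to find $q_i \in Y_i$ with $q_0 = q_n = 0$ and $\|q_{i+1}-q_i\|\le\sqrt{2}$. The cases $n=1, 2$ are immediate from the definition of 1-density: for $n=1$ there are no intermediate points, and for $n=2$ picking the point of $Y_1$ closest to the origin gives $\|q_1\|<1\le\sqrt{2}$.

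For $n \ge 3$, the naive greedy strategy of picking each $q_i$ closest to the origin gives $\|q_i\| < 1$ and hence only $\|q_{i+1}-q_i\| < 2$, a factor of $\sqrt{2}$ worse than what we need. Even at $n=3$ this strategy can fail in the worst case: once $q_1$ is fixed with $\|q_1\|<1$, the admissible region for $q_2$ is the lens formed by intersecting the two closed $\sqrt{2}$-balls around $0$ and $q_1$, whose largest inscribed ball has radius $\sqrt{2}-\|q_1\|/2$, which can be strictly less than $1$; so 1-density alone does not force a point of $Y_2$ into the lens. Closing this gap is the heart of the theorem.

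I would exploit the two coupled degrees of freedom in the problem: the overall shift $s$, which may be taken anywhere within a $\sqrt{2}$-disk of $t/n$ (so $q_n = t-ns$ need not be $0$), together with the joint choice of the $q_i$. A natural attempt is a topological/continuity argument: parameterize candidate shifts $s$ over a compact region and invoke an intermediate-value or Brouwer-type principle to find an $s$ for which a valid sequence exists. An alternative is an inductive bisection, choosing an intermediate $p_k$ so that the two sub-paths can be solved recursively with matching shifts, and combining the two resulting smallest-enclosing-ball bounds via a Jung-type estimate.

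The main obstacle is precisely this global coordination. Since 1-density only provides closeness within $<1$ of any target, no purely local construction can beat the naive factor of $2$, let alone reach $\sqrt{2}$; the proof must use the joint freedom in $s$ and in the $p_i$. Once the main construction is obtained, $D_d' \le 2\sqrt{2}$ is the direct statement, and the bound $D_d \le 2\sqrt{2}$ on the diameter variant follows since the diameter of the step vectors $v_i = p_{i+1}-p_i$ is at most twice the $\sqrt{2}$-radius of their enclosing ball.
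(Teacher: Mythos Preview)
Your proposal is not a proof; it is a discussion of the difficulty together with a menu of possible strategies, none of which you carry out. The only parts you actually establish are the reduction to $t=\underline{0}$ and the trivial cases $n\le 2$. For $n\ge 3$ you write ``I would exploit\ldots'', ``a natural attempt is\ldots'', ``an alternative is\ldots'', and then concede that ``the main obstacle is precisely this global coordination'' --- but you never resolve that obstacle. That is the entire content of the theorem.

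Two concrete gaps. First, your reduction is slightly misstated: by fixing $s=t/n$ you reduce to ``$q_0=q_n=0$ and $\|q_{i+1}-q_i\|\le\sqrt{2}$'', which is \emph{stronger} than what you need and in fact need not hold. The correct reduction keeps a free shift $s'$ in the translated problem (you later seem to realize this, but the two paragraphs contradict each other). Second, and more importantly, you never identify the geometric fact that makes $\sqrt{2}$ (rather than $2$) attainable. In the paper this is Lemma~\ref{lem:convhull}: if $y_1,\dots,y_k\in B(c,1)$ then $B(\conv(y_1,\dots,y_k),1)\subseteq\bigcup_i B(y_i,\sqrt{2})$. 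This lemma is what converts the naive ``$<2$'' into ``$\le\sqrt{2}$'', and it has no analogue in your write-up. The paper then builds a continuous map $h_n(s)$ by iterating a continuous selection $g_i(c)\in\conv(X_i\cap B(c,1))$ (Lemma~\ref{lem:binary}), applies a Brouwer-type argument (Lemma~\ref{lem:Brouwer}) to find $s_0$ with $h_n(s_0)=\underline{0}$, and proves by induction --- using Lemma~\ref{lem:convhull} at each step --- that every point of $\conv(X_n\cap B(h_{n-1}(s)+s,1))$ is $s$-reached. Your ``topological/continuity argument'' gesture is in the right direction, but without the convex-hull lemma and the specific recursive construction there is no proof.
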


It was already mentioned in \cite{BCKT18} that it has no effect on $D_d$ (and similarly on $D_d'$) if we apply any of the restrictions that either $t=\underline{0}$ or that all the $X_i$ coincide with a 1-dense set $X$. 
We phrase all \textbf{equivalent forms of Theorem~\ref{thm:firstform}} obtained in these ways. 

\begin{Theorem}\label{thm:Xmain}
For all $d,n\in\N$, $t\in \R^d$ and $X \subseteq \R^d$ 1-dense set there exist a tuple $(p_0, \ldots, p_n)$ and $s\in \R^d$ such that $p_0=\underline{0}, p_n=t$, $p_i\in X$ for all $1\leq i \leq n-1$, and for all $0\leq i \leq n-1$ we have $\big\|(p_{i+1}-p_i)-s\big\|\leq \sqrt{2}$. 
\end{Theorem}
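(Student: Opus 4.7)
\emph{Plan.} This statement is obtained from Theorem~\ref{thm:firstform} by direct specialization. Given a single $1$-dense set $X \subseteq \R^d$, I invoke Theorem~\ref{thm:firstform} with the choice $X_1 = X_2 = \cdots = X_{n-1} = X$, which is admissible because each $X_i$ is then $1$-dense by hypothesis. The theorem furnishes a tuple $(p_0, \ldots, p_n)$ with $p_0 = \underline{0}$, $p_n = t$, $p_i \in X_i = X$ for every $1 \leq i \leq n-1$, together with a vector $s \in \R^d$ satisfying $\big\|(p_{i+1} - p_i) - s\big\| \leq \sqrt{2}$ for every $0 \leq i \leq n-1$. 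This is verbatim the conclusion of Theorem~\ref{thm:Xmain}, so no further construction is needed.

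\emph{Expected difficulty.} There is essentially no obstacle in this direction: the text preceding Theorem~\ref{thm:Xmain} explicitly identifies it as one of the ``equivalent forms'' of Theorem~\ref{thm:firstform}, and the easy direction of this equivalence is witnessed by the specialization above. All the nontrivial content of the statement --- in particular the specific constant $\sqrt{2}$ and the existence of a uniform center $s$ for the step vectors --- is absorbed into Theorem~\ref{thm:firstform}, whose own proof would require the genuine geometric work. The converse implication, namely that the uniform-$X$ formulation already implies the fully general formulation of Theorem~\ref{thm:firstform}, is what justifies calling these statements ``equivalent''; this converse was recorded in \cite{BCKT18} and need not be re-derived here. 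Therefore the proof of Theorem~\ref{thm:Xmain} reduces to a single invocation of Theorem~\ref{thm:firstform} with constant $X_i$'s.
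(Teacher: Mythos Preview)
Your proposal is correct and matches the paper's approach exactly: the paper explicitly states that Theorem~\ref{thm:Xmain} is a special case of Theorem~\ref{thm:firstform}, obtained precisely by taking $X_1=\cdots=X_{n-1}=X$, while the substantive argument is carried out for Theorem~\ref{thm:genmain}.
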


\begin{Theorem}\label{thm:genmain}
For all $d,n\in\N$ and $X_1, \ldots, X_{n-1} \subseteq \R^d$ 1-dense sets there exist a tuple $(p_0, \ldots, p_n)$ and $s\in \R^d$ such that $p_0=p_n=\underline{0}$, $p_i\in X_i$ for all $1\leq i \leq n-1$, and for all $0\leq i \leq n-1$ we have $\big\|(p_{i+1}-p_i)-s\big\|\leq \sqrt{2}$. 
\end{Theorem}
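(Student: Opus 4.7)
The plan is to derive Theorem~\ref{thm:genmain} as an immediate specialization of Theorem~\ref{thm:firstform}, which the excerpt permits me to assume. Theorem~\ref{thm:firstform} establishes the same conclusion for an arbitrary target $t \in \R^d$, so I would simply substitute $t = \underline{0}$: the tuple $(p_0, \ldots, p_n)$ and slope vector $s$ it produces then satisfy $p_0 = p_n = \underline{0}$, $p_i \in X_i$ for $1 \leq i \leq n-1$, and $\|(p_{i+1}-p_i)-s\| \leq \sqrt{2}$ for all $i$, which is exactly the statement of Theorem~\ref{thm:genmain}. In this direction there is no real obstacle.

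Since the excerpt advertises Theorem~\ref{thm:genmain} as an \emph{equivalent} form rather than a mere consequence of Theorem~\ref{thm:firstform}, I would also record the reverse implication, which is the content the paper is appealing to when it calls the two statements equivalent. Given $t \in \R^d$ and 1-dense sets $X_1,\ldots,X_{n-1}$, introduce the translated sets $\tilde X_i := X_i - (i/n)\, t$. Translation preserves $1$-density, so each $\tilde X_i$ is 1-dense. Applying Theorem~\ref{thm:genmain} to these yields a tuple $(q_0, \ldots, q_n)$ and some $\tilde s \in \R^d$ with $q_0 = q_n = \underline{0}$, $q_i \in \tilde X_i$, and $\|(q_{i+1}-q_i)-\tilde s\| \leq \sqrt{2}$. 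Setting $p_i := q_i + (i/n)\, t$ then gives $p_0 = \underline{0}$, $p_n = t$, $p_i \in X_i$, and step vectors within $\sqrt{2}$ of $s := \tilde s + t/n$, recovering Theorem~\ref{thm:firstform}.

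The only bookkeeping is to verify that the affine shift preserves the step-discrepancy bound: each step vector $p_{i+1}-p_i$ equals $(q_{i+1}-q_i) + t/n$, so subtracting $s = \tilde s + t/n$ recovers $(q_{i+1}-q_i)-\tilde s$, whose norm is at most $\sqrt{2}$ by construction. The real content is thus hidden entirely in Theorem~\ref{thm:firstform}; Theorem~\ref{thm:genmain} is a convenient reformulation requiring no additional work beyond this translation.
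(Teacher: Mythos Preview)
Your reduction of Theorem~\ref{thm:genmain} to Theorem~\ref{thm:firstform} by setting $t=\underline{0}$ is logically sound, and the converse direction via the translation $\tilde X_i = X_i - (i/n)t$ is also correct (indeed the paper sketches exactly this translation argument in the paragraph following the three theorem statements). However, there is a genuine circularity issue: in the paper's logical structure, Theorem~\ref{thm:firstform} is \emph{not} proved independently. The paper explicitly announces ``We are going to prove Theorem~\ref{thm:genmain}'' and then gives a direct, self-contained proof of Theorem~\ref{thm:genmain} in Section~2; Theorem~\ref{thm:firstform} is established only afterward, via the equivalence you have reproduced. So assuming Theorem~\ref{thm:firstform} to prove Theorem~\ref{thm:genmain} is assuming the conclusion.

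The paper's own proof of Theorem~\ref{thm:genmain} is substantially different and contains the real mathematical content. It builds continuous functions $h_n:\R^d\to\R^d$ recursively by $h_0(s)=\underline{0}$ and $h_n(s)=g_n(h_{n-1}(s)+s)$, where each $g_n$ is a continuous selection into $\conv(X_n\cap B(c,1))$ (Lemma~\ref{lem:binary}). A Brouwer-type argument (Lemma~\ref{lem:Brouwer}) shows $h_n(s_0)=\underline{0}$ for some $s_0$. An induction using the geometric Lemma~\ref{lem:convhull} then shows that every point of $\conv(X_n\cap B(h_{n-1}(s)+s,1))$ is $s$-reached in $n$ steps, so in particular $\underline{0}=h_n(s_0)$ is $s_0$-reached. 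Your proposal bypasses all of this by invoking a theorem whose only proof in the paper is the very argument you skipped.
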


Theorems~\ref{thm:Xmain} and \ref{thm:genmain} are special cases of Theorem~\ref{thm:firstform}. For the other directions, notice that the problem in Theorem 1 remains equivalent if we replace $t$ with $t'$ and translate each $X_i$ by $-\frac{i}{n} \cdot (t' - t)$. For Theorem~\ref{thm:Xmain}, we should use a $t'$ large enough, and for Theorem~\ref{thm:genmain}, we should use $t' = 0$.

We are going to prove Theorem~\ref{thm:genmain}. 

\section{Proof of the main result}

We start with some simple lemmas concerning convex geometry and topology. 
The closed ball with center $c$ and radius $r$ in $\R^d$ is denoted by $B(c,r)$. 
More generally, the $r$-neighborhood of a set $C$, i.e., the set of points at distance at most $r$ from a set $C$ is denoted by $B(C,r)$. 

\begin{Lemma}\label{lem:convhull}
If $y_1,\ldots, y_k\in B(c,1)$ for some $c\in \R^d$, then 
$$B(\conv(y_1,\ldots, y_k),1)\subseteq \bigcup\limits_{i=1}^k B\big(y_i, \sqrt{2}\big)$$
\end{Lemma}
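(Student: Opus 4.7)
The plan is to fix an arbitrary $z\in B(\conv(y_1,\ldots,y_k),1)$ and produce an index $j$ with $\|z-y_j\|\le\sqrt{2}$. First I would let $x$ be the orthogonal projection of $z$ onto the closed convex set $C=\conv(y_1,\ldots,y_k)$, so that $\|z-x\|\le 1$, and write $x=\sum_j\lambda_j y_j$ as a convex combination. Since $B(c,1)$ is convex and contains each $y_j$, it contains $C$, hence $\|c-x\|\le 1$ as well.

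The heart of the argument is to pair the vector identity $\sum_j\lambda_j(y_j-x)=0$ against two distinguished directions. First, the variational characterization of the projection onto $C$ gives $\langle z-x,\,y_j-x\rangle\le 0$ for every $j$; summing against $\lambda_j$ yields $0$, so equality must in fact hold for every $j$ with $\lambda_j>0$. Second, $\sum_j\lambda_j\langle y_j-x,\,c-x\rangle=0$ as well, so we can pick an index $j_0$ with $\lambda_{j_0}>0$ and $\langle y_{j_0}-x,\,c-x\rangle\le 0$.

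For this $j_0$, the first pairing gives the Pythagorean decomposition $\|z-y_{j_0}\|^2=\|z-x\|^2+\|y_{j_0}-x\|^2$. The second, together with $\|y_{j_0}-c\|\le 1$, gives
\[
1\ge \|y_{j_0}-c\|^2 = \|y_{j_0}-x\|^2 - 2\langle y_{j_0}-x,\,c-x\rangle + \|c-x\|^2 \ge \|y_{j_0}-x\|^2 + \|c-x\|^2,
\]
so $\|y_{j_0}-x\|^2\le 1-\|c-x\|^2\le 1$. Combining the two bounds yields $\|z-y_{j_0}\|^2\le \|z-x\|^2+1\le 2$, i.e.\ $z\in B(y_{j_0},\sqrt{2})$.

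The only step where I expect to need a flash of insight is noticing that the single linear identity $\sum_j\lambda_j(y_j-x)=0$ should be exploited \emph{twice}: once against the direction $z-x$ (producing orthogonality, hence Pythagoras for \emph{all} indices in the support of the combination) and once against $c-x$ (extracting the bound $\|y_{j_0}-x\|^2\le 1-\|c-x\|^2$ from the ball constraint $y_{j_0}\in B(c,1)$). The degenerate subcases $z=x$ or $c=x$ need no special treatment, because then the corresponding inner-product inequalities hold trivially.
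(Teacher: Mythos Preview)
Your argument is correct. The overall architecture matches the paper's proof exactly: project the point onto the convex hull, use orthogonality at the projection to obtain a Pythagorean identity, and use the ambient ball $B(c,1)$ to find a vertex within distance $1$ of the projection.

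The difference is only in presentation. The paper works geometrically: it takes the minimal face $F$ of $\conv(y_1,\ldots,y_k)$ containing the projection, observes that the segment from the outside point to the projection is perpendicular to $F$, and then argues that the projection cannot lie in $\conv(F\setminus B(\text{projection},1))$ because all of $F$ sits inside $B(c,1)$. You carry out the same two steps algebraically, pairing the single identity $\sum_j\lambda_j(y_j-x)=0$ first against $z-x$ (forcing orthogonality for every $y_j$ in the support, which is precisely the ``perpendicular to the face'' statement) and then against $c-x$ (extracting a vertex $y_{j_0}$ with $\langle y_{j_0}-x,\,c-x\rangle\le 0$, which is the analytic content of the paper's ``$z\notin\conv(B(c,1)\setminus B(z,1))$''). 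Your version has the minor advantage of not invoking any facts about the face lattice of a polytope; the paper's version is a bit more visual. Either way, the core inequalities are identical.
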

\begin{proof}
Let $y\in B(\conv(y_1,\ldots, y_k),1)$. 
Let $z\in \conv(y_1,\ldots, y_k)$ be the closest point to $y$. 
Then $\dist(y,z)\leq 1$, and the segment $\overline{yz}$ is perpendicular to the face $F$ of $\conv(y_1,\ldots, y_k)$ containing $z$. 
As $z\in B(c,1)$, we have that $z\notin \conv(B(c,1)\setminus B(z,1))$, and in particular $z\notin \conv(F\setminus B(z,1))$. 
Thus there is an index $1\leq i\leq k$ such that $y_i\in F\cap B(z,1)$, and then $\dist(y,y_i)= \sqrt{\dist(y,z)^2+\dist(z,y_i)^2}\leq \sqrt{2}$. 
\end{proof}

We note that the radius $\sqrt{2}$ is optimal for $d\geq 2$: if $y_1$ and $y_2$ are antipodal points on the boundary of the closed unit ball $B(c,1)$, then there is a point $y$ on the same unit sphere such that $\dist(y,y_1)=\dist(y,y_2)=\sqrt{2}$. 
Clearly $y\in B(\conv(y_1, y_2),1)$ as the center $c$ is in $\conv(y_1, y_2)$. 

\begin{center}
\includegraphics[scale=0.8]{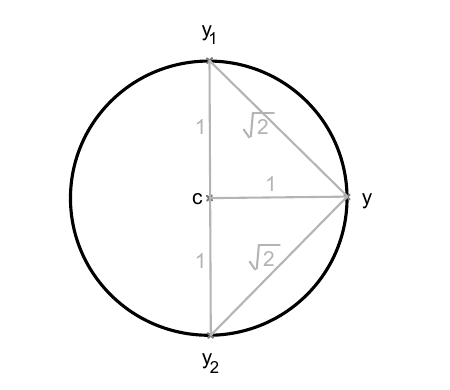}
\end{center}

For $d=1$, the value $\sqrt{2}$ can be replaced with 1. 
This is the crucial difference between one and higher dimensions, which yields the different estimates $D_1'\leq 2$ in the one-dimensional case and $D_d'\leq 2\sqrt{2}$ for $d\geq 2$. 

\begin{Lemma}\label{lem:binary}
If $X\subseteq \R^d$ is 1-dense, then there is a continuous function $g\: \R^d \rightarrow \R^d$ such that for all $c\in \R^d$ we have $g(c)\in \conv(X\cap B(c,1))$. 
\end{Lemma}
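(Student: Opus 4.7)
The plan is to build $g$ via a standard partition-of-unity selection. For each $c\in\R^d$, 1-density gives some $x_c\in X$ with $\|c-x_c\|<1$; set $\delta_c=1-\|c-x_c\|>0$. Then for any $c'\in B(c,\delta_c)$, the triangle inequality gives $\|c'-x_c\|\leq\|c'-c\|+\|c-x_c\|<\delta_c+(1-\delta_c)=1$, so $x_c\in B(c',1)$. Thus each ball $B(c,\delta_c)$ comes equipped with a single point of $X$ that lies inside the unit ball of every center in the neighbourhood.

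Next I would invoke paracompactness of $\R^d$: the open cover $\{B(c,\delta_c)\}_{c\in\R^d}$ admits a locally finite open refinement $\{U_\alpha\}_{\alpha\in A}$, each $U_\alpha$ contained in some $B(c_\alpha,\delta_{c_\alpha})$, together with a continuous partition of unity $\{\phi_\alpha\}$ subordinate to $\{U_\alpha\}$. Writing $x_\alpha:=x_{c_\alpha}\in X$, I define
\[
g(c)=\sum_{\alpha\in A}\phi_\alpha(c)\,x_\alpha.
\]
Local finiteness makes this a finite sum in a neighbourhood of each $c$, and since each $\phi_\alpha$ is continuous, $g$ is continuous on $\R^d$.

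It remains to check the convex-hull condition. Fix $c\in\R^d$. Then $\phi_\alpha(c)\neq 0$ only when $c\in U_\alpha\subseteq B(c_\alpha,\delta_{c_\alpha})$, in which case the argument above shows $x_\alpha\in B(c,1)$; of course $x_\alpha\in X$ by construction. Because the $\phi_\alpha(c)$ are non-negative and sum to $1$, $g(c)$ is a convex combination of finitely many points of $X\cap B(c,1)$, hence lies in $\conv(X\cap B(c,1))$.

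The only delicate point is calibrating the radii $\delta_c$ so that the chosen $x_c$ remains admissible throughout a whole neighbourhood; once this margin is baked in, continuity comes for free from the partition of unity, and no special feature of $X$ beyond 1-density is needed. I do not foresee a genuine obstacle beyond taking some care with the open vs.\ closed ball in the definition of 1-density.
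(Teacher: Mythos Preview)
Your argument is correct: the margin $\delta_c$ ensures each selected point $x_\alpha$ lies in the closed unit ball around every $c$ in $U_\alpha$, and the partition of unity then produces a continuous convex combination with values in $\conv(X\cap B(c,1))$ as required. The open/closed ball issue you flag is harmless---for the cover you should use open balls of radius $\delta_c$, and the triangle-inequality step then gives the strict bound $\|c'-x_c\|<1$, which is even stronger than needed since the paper's $B(c,1)$ is closed.

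Your route is genuinely different from the paper's. The paper defines $g(c)$ directly as the weighted centroid of $X\cap B(c,1)$, assigning each $y$ the weight $1-\|c-y\|$; since these weights vanish on $\partial B(c,1)$ and the total weight stays positive, the centroid varies continuously with $c$. That construction is shorter and more explicit, but tacitly assumes one can form a centroid of $X\cap B(c,1)$---unproblematic when $X$ is locally finite, but not literally defined for arbitrary $1$-dense $X$ (e.g.\ $X=\R^d$) without specifying a measure. Your partition-of-unity selection avoids this by committing to finitely many fixed points per neighbourhood, so it works uniformly for any $1$-dense $X$ at the cost of invoking paracompactness and a less explicit $g$.
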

\begin{proof}
The set $X\cap B(c,1)$ is nonempty because $X$ is 1-dense. 
Define $g(c)$ as the weighted centroid of the set $X\cap B(c,1)$ where a point $y\in X\cap B(c,1)$ has weight $1-\big\|c-y\big\|$. 
As the expression $1-\big\|c-y\big\|$ is zero if $y$ is on the boundary of the closed ball $B(c,1)$, and the total weight is never zero, the weighted centroid is a continuous function. 
\end{proof}

\begin{Lemma}\label{lem:Brouwer}
Let $R>0$. 
If $f\: B(\underline{0},R)\rightarrow \R^d$ is continuous and $f(x)=x$ for all $\big\|x\big\|=R$, then $\underline{0}\in \Ima(f)$.
\end{Lemma}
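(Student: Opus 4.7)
The plan is to argue by contradiction, assuming $\underline{0}\notin \Ima(f)$, and to derive a retraction of the closed ball onto its boundary sphere, which contradicts the classical non-retraction theorem (equivalently, Brouwer's fixed point theorem, which justifies the name of the lemma).

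Under the assumption $f(x)\neq \underline{0}$ for every $x\in B(\underline{0},R)$, the formula
\[
r(x) \;=\; R\cdot \frac{f(x)}{\|f(x)\|}
\]
defines a continuous map $r\colon B(\underline{0},R)\to \partial B(\underline{0},R)$, because it is a composition of continuous functions whose image lies on the sphere of radius $R$. For $x$ with $\|x\|=R$, the hypothesis $f(x)=x$ gives $r(x)=R\cdot x/R=x$, so $r$ restricts to the identity on $\partial B(\underline{0},R)$. Thus $r$ is a continuous retraction from the closed ball onto its boundary, which is the standard impossibility.

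If one prefers to cite Brouwer directly rather than the no-retraction statement, the same $r$ can be post-composed with the antipodal map: the function $h(x)=-r(x)$ maps $B(\underline{0},R)$ into itself, hence has a fixed point $x_0$; since $h(x_0)\in \partial B(\underline{0},R)$, we get $\|x_0\|=R$, and then the boundary condition forces $h(x_0)=-x_0$, so $x_0=-x_0$, contradicting $\|x_0\|=R>0$. I do not anticipate a genuine obstacle, as the statement is essentially a repackaging of a classical consequence of Brouwer's theorem; the only point to verify is the boundary identity $r(x)=x$, which is immediate from $f|_{\partial B(\underline{0},R)}=\mathrm{id}$.
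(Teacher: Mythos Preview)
Your proof is correct and essentially the same as the paper's: both argue by contradiction, normalize $f$ to a map $g(x)=R\,f(x)/\|f(x)\|$ into the boundary sphere, and then either cite the no-retraction theorem or compose with the antipodal map and apply Brouwer directly. The only cosmetic difference is that the paper writes the antipodal composition as $h(x)=g(-x)$ rather than your $h(x)=-g(x)$, which makes no substantive difference to the contradiction.
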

\begin{proof}
Seeking for a contradiction, assume that $\underline{0}\notin \Ima(f)$. 
As $\Ima(f)$ is compact, the continuous function $\big\|f(x)\big\|$ has a positive global minimum and maximum. 
Hence, the function $g\: B(\underline{0},R)\rightarrow B(\underline{0},R)$ defined by $g(x)=(R/\big\|f(x)\big\|)\cdot f(x)$ is continuous. 
Then the function $h\: B(\underline{0},R)\rightarrow B(\underline{0},R)$ defined by $h(x)=g(-x)$ is also continuous. 
According to Brouwer's fixed point theorem \cite{Brouwer1912}, the mapping $h$ has a fixed point $x_0\in B(\underline{0},R)$, that is, $h(x_0)=x_0$. 
As every vector in the image of $h$ has length $R$, such a fixed point $x_0$ must have length $R$. 
By assumption, $f(-x_0)=-x_0$, and consequently, $h(x_0)=-x_0$, a contradiction.
\end{proof}

%
\noindent\textbf{Proof of Theorem~\ref{thm:genmain}.}
We define a sequence of functions $h_n\: \R^d\rightarrow \R^d$ recursively for all $n\in N\cup\{0\}$. 
Let $h_0(s)=\underline{0}$, and for all $n\in \N$ let $h_n(s)=g_n(h_{n-1}(s)+s)$, where $g_n$ is the continuous function in Lemma~\ref{lem:binary} applied to $X=X_n$. 
Thus $h_n$ is continuous, and Lemma~\ref{lem:binary} also implies $h_n(s)\in B(ns,n)$. 
The purpose of the next paragraph is to formally prove that the $h_n$ image of the boundary of the closed ball $B(\underline{0},2)$ surrounds the origin, and conclude that the origin must be in the image of $h_n$. 

We define a continuous function $f\: B(\underline{0},3)\rightarrow \R^d$ in the following way. 
Let $f$ agree with $h_n$ on $B(\underline{0},2)$, and if $\big\|s\big\|=3$ then let $f(s)=s$.
We extend this partial function linearly, namely, if $2\leq \big\|s\big\|\leq 3$, then let $f(s)=(3-\big\|s\big\|)\cdot h_n(2s/\big\|s\big\|)+ (\big\|s\big\|-2)\cdot s$. 
Importantly, $f(s)\neq \underline{0}$ if $2\leq \big\|s\big\|\leq 3$, as the scalar product $\big\langle f(s), s\big\rangle$ is positive. 
Indeed, $\langle s, s\rangle>0$, and $\big\langle h_n(2s/\big\|s\big\|),  s\big\rangle$ is also positive: 
$$\big\langle h_n(2s/\big\|s\big\|),  s\big\rangle \geq \big\langle2n s/\big\|s\big\|, s\big\rangle - n\big\|s\big\| = n\big\|s\big\|>0.$$
According to Lemma~\ref{lem:Brouwer}, there is an $s_0\in B(\underline{0},3)$ with $f(s_0)=\underline{0}$. 
As this root must be in the domain $B(\underline{0},2)$, we have $h_n(s_0)=\underline{0}$. 

Given an $s\in \R^d$ we say that a vector $t\in \R^d$ is $s$-reached in $n$ steps (from the origin) if there is a tuple $(p_0, \ldots, p_n)$ such that $p_0=\underline{0}, p_n=t$, $p_i\in X_i$ for all $1\leq i \leq n-1$, and for all $0\leq i \leq n-1$ we have $\big\|(p_{i+1}-p_i)-s\big\|\leq \sqrt{2}$. 
If $t$ is $s$-reached in $n$ steps for some $s\in \R^d$ then we say that $t$ is reached in $n$ steps. 
Our goal is to show that the origin $t=\underline{0}$ is reached in $n$ steps for all $n\in \N$. 
According to the conclusion of the previous paragraph, it is enough to show that $h_n(s)$ is $s$-reached in $n$ steps: this would conclude the proof of Theorem~\ref{thm:genmain}. 
To this end, we show the following more general claim by induction on $n$. 
The fact that it is indeed more general follows from Lemma~\ref{lem:binary}: note that $h_n(s)=g_n(h_{n-1}(s)+s)\in \conv(X_n\cap B(h_{n-1}(s)+s,1))$.

\textit{Claim.} Every point in $\conv(X_n\cap B(h_{n-1}(s)+s,1))$ is $s$-reached in $n$ steps. 

\textit{Proof of Claim.} Let $n=1$ for the base case of the induction. 
That is, we need to prove that every point $y\in \conv(X_1\cap B(s,1))$ is $s$-reached in $1$ step. 
As $y\in B(s,1)$, the condition obviously applies to the tuple $(p_0,p_1)=(\underline{0},y)$.

Assume that the claim holds for $n\in \N$; we show it for $n+1$. 
That is, we need to prove that any $y\in \conv(X_{n+1}\cap B(h_{n}(s)+s,1))$ is $s$-reached in $n+1$ steps. 
According to Lemma~\ref{lem:binary}, we have $h_n(s)\in \conv(X_n\cap B(h_{n-1}(s)+s,1))$. 
Then $h_n(s)$ can be written as the convex combination of finitely many points $x_1,\ldots, x_k\in X_n\cap B(h_{n-1}(s)+s,1)$. 
In particular, the points $y_1=x_1+s,\ldots, y_k=x_k+s$ are contained in the closed unit ball $B(h_{n-1}(s)+2s,1)$. 
As $y$ is in the 1-neighborhood of the point $h_{n}(s)+s$, it is also in the 1-neighborhood of the set $\conv(y_1,\ldots, y_k)$. 
Hence, the conditions of Lemma~\ref{lem:convhull} apply to $y_1,\ldots, y_k$ and $y$. 
Thus there is an index $i$ such that $y\in B\big(y_i, \sqrt{2}\big)$. 

By the induction hypothesis, $x_i$ is $s$-reached in $n$ steps. 
Let $\underline{0}=p_0, p_1, \ldots, p_{n-1}, p_n=x_i$ be a witness to this fact. 
Then putting $p_{n+1}=y$ yields a tuple $(p_0, p_1, \ldots, p_n, p_{n+1})$ that $s$-reaches $y$ in $n+1$ steps: the only condition to check is $\big\|(p_{n+1}-p_n)-s\big\|\leq \sqrt{2}$, which is obvious since $(p_{n+1}-p_n)-s=(y-x_i)-s=y-y_i$ and $y\in B\big(y_i, \sqrt{2}\big)$. 
\qed
\bigskip

Using the same proof technique, we can obtain the optimal upper estimate $D_1'\leq 2$ in the 1-dimensional case. 
As we mentioned earlier, the constant $\sqrt{2}$ in the one-dimensional version of Lemma~\ref{lem:convhull} can be replaced with 1. 
Moreover, Lemma~\ref{lem:Brouwer} simplifies to the more elementary Intermediate Value Theorem (or Bolzano's theorem). 
Thus the result of \cite{BG15} follows from the above argument. 

\bibliographystyle{plain}
\bibliography{refs}

\end{document}